\title{On the minimal size of a generating set for Lattices in Lie Groups}
\author{Tsachik Gelander, Raz Slutsky}
\date{}
\newtheorem{thm}{Theorem}[section]
\newtheorem{lem}[thm]{Lemma}
\newtheorem{prop}[thm]{Proposition}
\newtheorem{defn}[thm]{Definition}
\newtheorem{eg}[thm]{Example}
\newtheorem{cor}[thm]{Corollary}
\newtheorem{claim}[thm]{Claim}
\newtheorem{rmk}[thm]{Remark}
\newcommand{\C}{\mathbb{C}}
\newcommand{\Sl}{SL_2(\mathbb{R})}
\newcommand{\bigslant}[2]{{\raisebox{.2em}{$#1$}\left/\raisebox{-.2em}{$#2$}\right.}}%
\begin{document}
\maketitle

\begin{abstract}
We prove that the rank (that is, the minimal size of a generating set) of lattices in a general connected Lie group is bounded by the co-volume of the projection of the lattice to the semi-simple part of the group. This was proved by Gelander for semi-simple Lie groups and by Mostow for solvable Lie groups. Here we consider the general case, relying on the semi-simple case. In particular, we extend Mostow's theorem from solvable to amenable groups.
\end{abstract}

\section{Introduction}
We denote by $d(\Gamma)$ the minimal number of generators of the group $\Gamma$.
A result by Gelander \cite{gelander11} states the following.

\begin{thm}[Gelander, 2011]
\label{gelander2011}
Let $G$ be a connected semisimple Lie group without compact factors. Then there exists a constant\footnote{The constant $C$ depends of course on the normalization of the Haar measure of $G$.} $C=C(G)$ such that \[ d(\Gamma) \leq C \operatorname{vol}(G/\Gamma) \] for every irreducible lattice $\Gamma \leq G$.
\end{thm}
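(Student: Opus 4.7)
The plan is to analyze the locally symmetric orbifold $M = \Gamma \backslash G / K$, where $K \leq G$ is a maximal compact subgroup. Since $G$ has no compact factors, $M$ is a complete finite-volume non-positively curved Riemannian orbifold with $\pi_1(M) = \Gamma$ (after passing to a torsion-free finite-index subgroup via Selberg's lemma, which only perturbs $d(\Gamma)$ by a controlled amount). The strategy is to produce a deformation retract of $M$ that carries a CW structure with $O(\operatorname{vol}(M))$ cells; then $d(\Gamma) = O(\operatorname{vol}(M))$ follows from the standard fact that $\pi_1$ of a connected CW complex is generated by the non-spanning-tree edges of its $1$-skeleton.

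The first step is the thick-thin decomposition. Fix a Margulis constant $\varepsilon = \varepsilon(G) > 0$ and split $M = M_{\geq \varepsilon} \cup M_{< \varepsilon}$. On $M_{\geq \varepsilon}$ the injectivity radius is bounded below by $\varepsilon/2$ and the sectional curvature is bounded in terms of $G$ alone, so a greedy packing by $(\varepsilon/4)$-balls has at most $O(\operatorname{vol}(M_{\geq \varepsilon}))$ centres. The nerve of a cover by $(\varepsilon/2)$-balls centred at these points is then a simplicial complex of uniformly bounded valence with $O(\operatorname{vol}(M))$ simplices that approximates $M_{\geq \varepsilon}$ up to a controlled deformation.

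The second step handles the thin part. By the Margulis lemma, it is a disjoint union of Margulis tubes around short closed geodesics together with, in the non-uniform case, finitely many cusps. Each tube deformation-retracts to its core geodesic, contributing $O(1)$ cells. Each cusp deformation-retracts onto a compact horospherical cross-section, which is an infranilmanifold of dimension $< \dim G$ and so admits a triangulation with $O(1)$ cells, the implicit constant depending only on $G$. The number of tubes and cusps is itself $O(\operatorname{vol}(M))$ by Margulis-lemma considerations and elementary volume counting, so gluing everything yields the desired CW deformation retract.

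The principal obstacle is the cusp analysis. For $\operatorname{rank}_\R(G) \geq 2$, Margulis arithmeticity makes the cusps explicit arithmetic quotients of horospheres, and the horospherical lattices are visibly nilpotent of bounded rank; in the remaining real-rank-one cases $\mathrm{SO}(n,1)$ and $\mathrm{SU}(n,1)$ (the rank-one groups lacking property (T)), one needs the explicit geometry of real- and complex-hyperbolic cusps together with the irreducibility hypothesis on $\Gamma$ to control the number and shape of cusp cross-sections. Note that the argument is entirely geometric; no use of property (T) is made, even though it is the natural tool one would first try in the higher-rank case.
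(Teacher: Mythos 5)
First, note that the paper you are commenting on does not prove this statement at all: Theorem \ref{gelander2011} is imported verbatim from \cite{gelander11}, so there is no internal proof to compare against. Your sketch does follow the broad strategy of Gelander's actual argument (thick--thin decomposition, a bounded-valence nerve over the thick part with $O(\operatorname{vol})$ vertices, and a retraction of the thin part), but as written it has gaps that are not cosmetic.

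The most serious one is the opening reduction. Passing to a torsion-free finite-index subgroup $\Gamma'\le\Gamma$ via Selberg's lemma does \emph{not} perturb things by a controlled amount: the index $n=[\Gamma:\Gamma']$ is not bounded in terms of $G$, and even granting a bound on $d(\Gamma')$ of the form $C\operatorname{vol}(G/\Gamma')=Cn\operatorname{vol}(G/\Gamma)$, there is no way to recover $d(\Gamma)\le C'\operatorname{vol}(G/\Gamma)$ from it --- the best generic estimate is $d(\Gamma)\le d(\Gamma')+O(\log n)$ with $\Gamma'$ normal, and the first term has already been inflated by the factor $n$. Handling torsion directly, i.e.\ working with the orbifold $\Gamma\backslash G/K$ and extracting generators from short elliptic as well as short hyperbolic/parabolic elements, is precisely the main technical contribution of \cite{gelander11} over the earlier manifold-only results; your proposal assumes this difficulty away. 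Two further points need repair. (i) The description of the thin part as ``tubes around short geodesics plus cusps'' is a rank-one picture; in higher rank the components of the thin part are more complicated, and one must argue via the Margulis lemma that each component has virtually nilpotent, boundedly generated $\pi_1$ rather than via an explicit tube/cusp list. (ii) The claim that a cusp cross-section ``admits a triangulation with $O(1)$ cells'' is false: infranilmanifolds of a fixed dimension fall into infinitely many homeomorphism types (e.g.\ Heisenberg nilmanifolds with $H_1\cong\Z^2\oplus\Z/k$), so no uniform cell bound exists. What is true, and what the argument actually needs, is that the fundamental group of each cross-section is a lattice in a virtually nilpotent group and hence generated by $O_G(1)$ elements; you should replace the CW-complex bookkeeping in the cusps by a generating-set bookkeeping. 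With those three repairs the outline is essentially the proof in \cite{gelander11}.
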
 

On the other hand, Mostow \cite{mostow54} showed that the rank of lattices in solvable Lie groups is bounded uniformly by the dimension of the group.\\

Our aim is to establish a similar result for a general Lie group without the semi-simplicity assumption. However, the following example shows we can not wish for a straightforward analogue.

\begin{eg}
Let $G = \mathbb{R} \times \Sl$. Now define the lattice $\Gamma_0 = \alpha \mathbb{Z} \times \Gamma$ for some $\alpha >0$ and $\Gamma \leq \Sl$ a lattice in $\Sl$. 
\end{eg}

While we can take $\Gamma$ with arbitrarily high rank, $\alpha$ can be taken to be small, to make the volume of $\mathbb{R}/\alpha \mathbb{Z}$ as small as we want.
Since \[ \text{vol}(G/\Gamma_0) =  \text{vol}(\mathbb{R} / \alpha \mathbb{Z}) \, \text{vol}(\Sl/\Gamma) \]
we get a lattice with a rank that is not controlled by its co-volume. However, when increasing the rank of $\Gamma$,  $\text{vol}(SL_2(\mathbb{R})/\Gamma)$ must grow. 

In light of this counter-example we define the semisimple co-volume of a lattice. 
To do this, note that every Lie group admits a Levi decomposition of the form $G = S \ltimes R$ where $R$ is the solvable radical and $S$ is the semisimple part.
This is in fact an almost direct product, as the map $S \times R \rightarrow G$ may have a discrete central kernel. 
A similar decomposition to an almost direct product can be done with the amenable radical of $G$, that is, the maximal amenable normal connected subgroup of $G$. 
In this case, we have that $G = S_{nc} \ltimes A$ where $A$ is amenable and $S_{nc}$ is semi-simple with no compact factors. 
Note that $A = S_c \ltimes R$ where $S_c$ is the maximal compact factor of the semi-simple part. 
This is because solvable and compact groups, as well as products of such, are amenable, and
non-compact semi-simple groups are not.
If $\Gamma$ is a lattice in $G$, then, by Auslander's theorem, $\Gamma \cap A$ is a lattice in $A$ 
and the projection of $\Gamma$ to the semi-simple non-compact part of $G$ is a lattice as well (see Lemma \ref{amenable_intersection}) . 
We are now ready to define the semi-simple co-volume of a lattice.

\begin{defn}
Let $G$ be a Lie group, and let $G = S \ltimes A$ be its decomposition, with $A$ being the amenable radical. Then the semisimple co-volume of $\Gamma$ is defined to be $\operatorname{covol}_s(\Gamma) := \operatorname{vol}(S/ \pi(\Gamma))$ where $\pi : G \rightarrow G/A$ is the projection.
\end{defn}

Having this definition under our belt, we state the main result.

\begin{thm}
\label{main_theorem}
  Let $G$ be a connected Lie group, then there exist constants $a,b$, depending on $G$, such that for every  lattice $ \Gamma \leq G$  
\[ 
 d(\Gamma) \leq a \operatorname{covol}_s(\Gamma) + b.
\]
\end{thm}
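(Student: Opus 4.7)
The plan is to exploit the decomposition $G = S \ltimes A$ and split $\Gamma$ via the short exact sequence
\[
1 \longrightarrow \Gamma \cap A \longrightarrow \Gamma \longrightarrow \pi(\Gamma) \longrightarrow 1,
\]
where $\pi \colon G \to G/A$ is the projection onto the semisimple non-compact part $S$. Using the elementary extension inequality $d(\Gamma) \leq d(\Gamma \cap A) + d(\pi(\Gamma))$ (one lifts generators of the quotient and appends generators of the kernel), the theorem reduces to bounding each factor separately: the quotient contribution will give the $a \cdot \operatorname{covol}_s(\Gamma)$ term, and the kernel contribution will give the additive constant $b$.

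For the quotient, $\pi(\Gamma)$ is a lattice in the connected semisimple Lie group $S$ without compact factors, so one wants to apply Theorem \ref{gelander2011}. Since Gelander's bound is stated for \emph{irreducible} lattices, I would first decompose $S$ into its simple factors $S = S_1 \times \cdots \times S_k$ and pass to a finite-index subgroup of $\pi(\Gamma)$ that splits as a product of irreducible lattices in subproducts of the $S_i$'s. Applying Theorem \ref{gelander2011} to each irreducible factor and summing, together with the fact that the co-volume of a product dominates the product of co-volumes (so each factor contributes at most $C \operatorname{covol}_s(\Gamma)$), yields a bound of the form $d(\pi(\Gamma)) \leq a \cdot \operatorname{covol}_s(\Gamma)$ with $a$ depending only on $G$.

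For the kernel, $\Gamma \cap A$ is by hypothesis a lattice in the connected amenable Lie group $A = S_c \ltimes R$. Here I would prove the \emph{uniform} bound $d(\Gamma \cap A) \leq b(A)$, which is exactly the extension of Mostow's theorem advertised in the abstract. The strategy is to reduce to the solvable case: by Mostow's results on discrete subgroups of amenable Lie groups, $\Gamma \cap A$ is virtually polycyclic, and passing to a finite-index subgroup one can arrange the image in the compact semisimple factor $S_c$ to lie in a fixed maximal torus (or at least a fixed closed abelian subgroup after replacing $\Gamma \cap A$ by a subgroup of controlled finite index). After this reduction the lattice sits inside an extension of a compact abelian group by $R$, and Mostow's original bound for lattices in solvable Lie groups, together with a uniformly bounded contribution from the compact piece and the finite-index correction, supplies the desired constant $b$.

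The main obstacle is the third paragraph: unlike the solvable setting, the intersection $(\Gamma \cap A) \cap R$ need not be a lattice in $R$, and the projection of $\Gamma \cap A$ into $S_c$ need not be closed, so one cannot simply invoke Auslander inside $A$. The hard work is therefore to show that, after passing to a finite-index subgroup of index bounded in terms of $A$ alone, the lattice can be replaced by one whose projection to $S_c$ lies in a chosen abelian subgroup, so that Mostow's solvable bound becomes available with only a bounded additive overhead.
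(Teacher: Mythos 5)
Your skeleton is exactly the paper's: decompose $G = S \ltimes A$, use the exact sequence $1 \to \Gamma\cap A \to \Gamma \to \pi(\Gamma) \to 1$, bound the quotient by the semisimple covolume and the kernel by a uniform constant. But both halves, as you propose to execute them, have genuine gaps. For the quotient: you cannot reduce to the irreducible case by passing to a finite-index subgroup $\Gamma'$ of $\pi(\Gamma)$ that splits as a product of irreducible lattices, because the index $[\pi(\Gamma):\Gamma']$ is \emph{not} bounded in terms of $G$ (fiber products $\Lambda_1 \times_F \Lambda_2$ over arbitrarily large common finite quotients $F$ give reducible lattices with $[\Gamma:\Gamma'] = |F|$ unbounded). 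Recovering $d(\pi(\Gamma))$ from $d(\Gamma')$ costs roughly $\log|F|$ extra generators, and $\operatorname{vol}(G/\Gamma') = |F|\operatorname{vol}(G/\pi(\Gamma))$, so the bound you get is linear in the covolume of $\Gamma'$, not of $\pi(\Gamma)$. The paper avoids this entirely (Theorem \ref{reducible_generalization}): if $\pi(\Gamma)$ is reducible, write $S = G_1 \times G_2$ with $\Gamma\cap G_1$ a lattice, use the exact sequence $1 \to \Gamma\cap G_1 \to \Gamma \to \pi_{G_2}(\Gamma) \to 1$ and induct on the number of factors; the key point is that $\operatorname{vol}(S/\pi(\Gamma)) = \operatorname{vol}(G_1/\Gamma\cap G_1)\cdot\operatorname{vol}(G_2/\pi_{G_2}(\Gamma))$ \emph{exactly}, and the sum of the two contributions is dominated by their product because each $d$ is at least $2$.

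For the kernel: you correctly identify the obstruction (the projection of $\Gamma\cap A$ to $S_c$ need not be closed, and $(\Gamma\cap A)\cap R$ need not be a lattice in $R$), but your proposed fix --- pass to a subgroup of index bounded in terms of $A$ alone whose image in $S_c$ lies in a fixed abelian subgroup --- is not established, and is not what the paper does; indeed no uniform bound on that \emph{index} is available. The paper's route is: let $L = \overline{\pi_{A/R}(\Gamma)}^{\circ}$, which is connected solvable inside a compact group, hence abelian; set $\tilde R = \pi_{A/R}^{-1}(L)$, which is solvable; show via Mostow's lemma that $\Gamma\cap\tilde R$ is a lattice in $\tilde R$, so $d(\Gamma\cap\tilde R) \le \dim\tilde R$. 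The quotient $\Gamma/(\Gamma\cap\tilde R)$ is a \emph{finite} subgroup of a compact Lie group of bounded dimension whose \emph{order} is unbounded but whose \emph{rank} is uniformly bounded by the Jordan--Schur theorem (Proposition \ref{rank_of_finite_subgroups}); this is the tool your sketch is missing. One also needs the Bezout-type bound (Corollary \ref{components_bound_cor}) on the number of components of $N_A(\tilde R)$, since $\tilde R$ is not normal in $A$. Without Jordan--Schur (or an equivalent device) your additive constant $b$ does not materialize.
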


As in Theorem \ref{gelander2011}, the constant $a$ depends on the normalization of the Haar measure of $G/A$.

This theorem is a counter-part of the main theorem in $\cite{gelander11}$, where $G$ is assumed to be semi-simple. 

\begin{rmk}
A similar result can be stated for non-connected Lie groups, under the additional assumption that $G$ is compactly generated (equivalently, that $G/G^0$ is f.g.). If we write $\Gamma_0 := \Gamma \cap G^0$ we have that $\operatorname{vol}(G/\Gamma) = \operatorname{vol}(G^0/\Gamma_0) [G: \Gamma G^0]$ and one can define $\operatorname{covol}_s(\Gamma) := \operatorname{covol}_s(\Gamma_0)[G:\Gamma G^0]$. The analogous result then follows from the connected case.
\end{rmk}

Note that Theorem \ref{main_theorem} implies the well known but non-trivial:
\begin{cor}
Lattices in connected Lie groups are finitely generated.
\end{cor}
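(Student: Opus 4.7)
The plan is to deduce the corollary as an immediate consequence of Theorem \ref{main_theorem}. Given a lattice $\Gamma$ in a connected Lie group $G$, the theorem furnishes constants $a,b$ depending only on $G$ such that $d(\Gamma) \leq a \operatorname{covol}_s(\Gamma) + b$. So the task reduces to showing that the right-hand side is finite, i.e.\ that $\operatorname{covol}_s(\Gamma)$ is finite for every lattice $\Gamma \leq G$.

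By definition, $\operatorname{covol}_s(\Gamma) = \operatorname{vol}(S/\pi(\Gamma))$, where $G = S \ltimes A$ is the decomposition with amenable radical $A$ and $\pi : G \to G/A \cong S$ is the projection. The finiteness of this volume is exactly the statement that $\pi(\Gamma)$ is a lattice in $S$, which is precisely the content of Lemma \ref{amenable_intersection} (invoked in the discussion preceding the definition of semisimple co-volume, and ultimately resting on Auslander's theorem on lattices in Lie groups with solvable/amenable radical). Thus $\operatorname{covol}_s(\Gamma) < \infty$, and the bound from Theorem \ref{main_theorem} yields $d(\Gamma) < \infty$.

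Since this corollary is essentially a formal consequence of Theorem \ref{main_theorem} together with the fact that $\pi(\Gamma)$ is a lattice in the semisimple quotient, there is no real obstacle in its proof: the difficulty is entirely concentrated in the main theorem and in Lemma \ref{amenable_intersection}. I would therefore write it in two sentences, emphasizing that finite generation is extracted simply by observing that the explicit quantitative bound is finite under the standing assumption that $\Gamma$ is a lattice.
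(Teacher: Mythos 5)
Your proposal is correct and matches the paper exactly: the corollary is stated there as an immediate consequence of Theorem \ref{main_theorem}, with the finiteness of $\operatorname{covol}_s(\Gamma)$ guaranteed by Lemma \ref{amenable_intersection}, just as you argue. Nothing further is needed.
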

Originally this result was proved separately and by different methods for different types of Lie groups. The argument from \cite{gelander11} and the completions given here form a unified proof for this result.


\section{The Proof}
\subsection{Proof outline}
Na\"{i}vely, one may wish to use the Levi decomposition. 
This will enable to combine the known results for lattices in solvable and in semi-simple Lie groups. 
However, $\Gamma \cap R$ is unfortunately not necessarily a lattice in $R$, the solvable radical of $G$, in contrast with the assertion of \cite[~8.28]{raghunathan72}. 
For example, take $G = \mathbb{R} \times SO(3)$, and define $\Gamma := \langle (1,g) \rangle$ for some element $g \in SO(3)$ of infinite order. 
This is a lattice in $G$ since $[0,1) \times SO(3)$ is a fundamental domain with finite volume. 
However, $\Gamma \cap R = \Gamma \cap \mathbb{R} = \{ 0\}$ which is clearly not a lattice, and the projection of $\Gamma$ to $SO(3)$ is nondiscrete. 

To this end we decompose $G$ as $G = S_{nc} \ltimes A$ where $S_{nc}$ is the product of the noncompact simple factors of the semisimple part of the Levi decomposition, and $A$ is the amenable radical.
While the mentioned corollary of Raghunathan is not true in general as stated, it can be corrected by replacing the solvable radical with the amenable one. 
%

\begin{lem}
\label{amenable_intersection}
Let $G$ be a connected Lie group and let $A$ be its amenable radical. If $\Gamma$ is a lattice in $G$, then $\Gamma \cap A$ is a lattice in $A$ and $\pi_{G/A}(\Gamma)$ is a lattice in $G/A$.
\end{lem}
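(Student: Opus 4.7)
The plan is first to show that $\Gamma A$ is closed in $G$ --- equivalently, that $\pi_{G/A}(\Gamma)$ is discrete in $G/A$ --- and then to deduce both halves of the lemma from this reduction by a standard argument. Granting the discreteness, I would apply the general principle (\cite[Theorem 1.13]{raghunathan72}) that if $\Gamma$ is a lattice in a Lie group $G$ and $N$ is a closed normal subgroup such that $\Gamma N$ is closed in $G$, then $\Gamma \cap N$ is a lattice in $N$ and $\Gamma N/N$ is a lattice in $G/N$. Applied with $N = A$, this gives both assertions at once. The underlying argument is a fundamental-domain decomposition along the fibration $G \to G/A$: Fubini, together with finiteness of the invariant measure on $G/\Gamma$, forces finiteness of the invariant measures on both the base $(G/A)/\pi_{G/A}(\Gamma)$ and the fibres $A/(\Gamma \cap A)$.

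To establish the discreteness of $\pi_{G/A}(\Gamma)$ I would invoke Auslander's theorem on lattices in connected Lie groups. Let $R$ denote the solvable radical of $G$; since $A = S_c \ltimes R$ with $S_c$ a compact semisimple factor, the subgroup $A/R$ coincides with the maximal compact semisimple normal subgroup of the semisimple quotient $G/R$. Auslander's theorem furnishes a finite-index subgroup $\Gamma_0 \leq \Gamma$ such that the identity component $H^0$ of the closure $\overline{\Gamma_0 R/R}$ in $G/R$ is contained in $A/R$. The further projection $G/R \to G/A = (G/R)/(A/R)$ therefore collapses $H^0$, so $\pi_{G/A}(\Gamma_0)$ is discrete in $G/A$. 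Since $[\Gamma:\Gamma_0] < \infty$, the image $\pi_{G/A}(\Gamma)$ is a finite extension of a discrete group, hence discrete as well.

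The main obstacle is precisely this application of Auslander's theorem --- the non-trivial fact that any accumulation of $\Gamma_0 R/R$ in $G/R$ is absorbed entirely by the compact factor, with no ``leakage'' into the non-compact semisimple factors. This is exactly the reason the correction of \cite[Corollary 8.28]{raghunathan72} requires replacing $R$ by $A$: in the solvable-radical version one ignores the compact semisimple part of $G/R$, and the lattice can project densely into it, as in the $\R \times SO(3)$ example with $\Gamma = \langle (1,g) \rangle$ where $\pi_R(\Gamma) = \langle g \rangle$ is dense in a torus of $SO(3) = A/R$. Enlarging $R$ to $A$ is precisely what turns this ``dense-in-compact'' phenomenon into the desired discreteness downstream.
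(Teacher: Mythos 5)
Your overall architecture is sound, and it in fact coincides with the \emph{alternative} proof the paper sketches in the footnote to the lemma rather than with the proof given in the body: reduce everything to the discreteness of $\pi_{G/A}(\Gamma)$, and then obtain that discreteness by showing that the identity component of $\overline{\Gamma R}$ is swallowed by $A = RS_c$, so that $\Gamma A$ is closed and Mostow's lemma applies. (Two small cautions on the reduction: closedness of $\Gamma A$ is implied by, but not equivalent to, discreteness of $\pi_{G/A}(\Gamma)$ --- a closed non-discrete image is conceivable a priori --- and \cite[Theorem 1.13]{raghunathan72} is stated in the direction ``lattice in the fibre $\Rightarrow$ conclusions'', so the paper's reference \cite[Theorems 4.3, 4.7]{onischik00} is the cleaner one for the direction you need. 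Neither issue is fatal, since you do prove discreteness directly.) For contrast, the paper's main proof reaches discreteness by a different route: it works directly in $G/A$, uses the topological Tits alternative \cite[Theorem 9.5]{toti} to show that $\overline{\pi_{G/A}(\Gamma)}^{\mathrm{o}}$ is amenable, and then kills it with Borel density, since a normal amenable connected subgroup of the semisimple group $G/A$ without compact factors is trivial.

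The genuine gap is in the step you yourself single out as the crux. Auslander's theorem (\cite[8.24]{raghunathan72}) does \emph{not} say that $\overline{\Gamma R}^{\mathrm{o}}/R$ lands in the compact factor $A/R$ of $G/R$; it says only that $\overline{\Gamma R}^{\mathrm{o}}$ is solvable, and it does so for an arbitrary discrete subgroup $\Gamma$ --- no passage to a finite-index subgroup is involved, which suggests the statement you have in mind is a conflation with something else. Solvability alone is far from enough: a connected solvable subgroup of a semisimple group can perfectly well be a one-parameter unipotent subgroup of a non-compact factor, which would not be collapsed by the projection to $G/A$. What forces the solvable piece into $RS_c$ is the Borel density theorem, and this is where the hypothesis that $\Gamma$ is a \emph{lattice} (and not merely discrete) enters: since $G/\overline{\Gamma R}$ carries a finite invariant measure, the connected group $\overline{\Gamma R}^{\mathrm{o}}$ must be contained in $RS_c$. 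This is precisely \cite[Lemma 3.4(d)]{mostow71}, the reference given in the paper's footnote. Your argument becomes complete once ``Auslander's theorem'' is replaced at that point by ``Auslander's theorem together with Borel density in the form of Mostow's Lemma 3.4(d)''; as written, the result you cite does not deliver the containment on which the whole proof rests.
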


This statement is well known but for reference we sketch a proof\footnote{More elementarily, one can also use Borel's density theorem \cite[Lemma 3.4(d)]{mostow71} to show that $\overline{\Gamma R}^{\mathrm{o}} \subset RS_c$, 
 and thus obtain $\Gamma \overline{\Gamma R}^{\mathrm{o}}S_c = \overline{\Gamma R}S_c = \Gamma RS_c$,
hence $\Gamma RS_c$ is closed.
Looking at $\Gamma \subset \Gamma RS_c \subset G$ , since $G / \Gamma$ has finite volume, $\Gamma RS_c / \Gamma = RS_c / (\Gamma \cap RS_c) = A / (\Gamma \cap A)$ must also have finite volume.}.

\begin{proof}

In view of \cite[Theorems 4.3, 4.7]{onischik00}  it is suffices to show that $\pi_{G/A}(\Gamma)$ is discrete in $G/A$. 
Since $A$ is amenable and $\Gamma$ is discrete, it follows from \cite[Theorem 9.5]{toti} that the identity connected component of $\overline{\pi_{G/A}(\Gamma)}$ is amenable. 
Since $G/A$ is semisimple with no compact factors it follows from the Borel density theorem that $\overline{\pi_{G/A}(\Gamma)}^\circ$ is normal in $G/A$ and therefore trivial. 
Thus $\pi_{G/A}(\Gamma)$ is discrete.
\end{proof}

%
%
%
As shown in \cite{bcmg16}, lattices in amenable Lie groups are co-compact, hence finitely generated. This leads us to the following route of reasoning. \\

We're left to prove our result separately in the amenable radical and the semi-simple part. In particular, we need to bound the rank of lattices in amenable Lie groups uniformly, i.e. to extend Mostow's theorem from solvable to amenable groups. 
Concerning the semisimple case, in \cite{gelander11} it is assumed that the lattice is irreducible, thus we have also have to reduce it from the general case to the irreducible one.

\subsection{Lattices in amenable groups}

Consider the case $G = OR$, where $O$ is a semi-simple compact Lie group, and $R$ is a connected, normal, solvable Lie group. Let $\Gamma \leq G$ be a lattice, and
define $L := \overline{\pi_{G/R}(\Gamma)}^{\mathrm{o}}$. By \cite[8.24]{raghunathan72},  $L$ is solvable. 
However, $L$ is a connected solvable subgroup of a compact group, thus abelian.
In fact, this is true even if $O$ is not compact, since by $\cite[\text{Lemma 3.4 (d)}]{mostow71}$ $L$ lies inside the compact factor of the semi-simple part. \\

Now, define $\tilde{R} := \pi_{G/R}^{-1}(L)$. It is solvable, since $\tilde{R}/R$ is abelian. We will soon show that $\Gamma \cap \tilde{R}$ is a lattice in $\tilde{R}$. Being solvable, we know that $\Gamma \cap \tilde{R}$ is generated by at most $dim (\tilde{R})$ elements \cite[3.8]{raghunathan72}. However, $\tilde{R}$ is not normal in $G$ and so we can not quotient by it. To that end we wish to look at $\tilde{R}$ inside its normalizer $N_G(\tilde{R})$.  
\\

First, let us prove the aforementioned claim
\begin{claim}
\label{intersection_is_lattice}
Let $G = O \cdot R$, and let $\Gamma$ be a lattice. Define $\tilde{R}$ as before. Then $\Gamma$ is a lattice in $H := N_G(\tilde{R})$ and $\Gamma \cap \tilde{R}$ is a lattice in $\tilde{R}$
\end{claim}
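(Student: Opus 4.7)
My plan is to split the argument into three steps: verify that $\Gamma \subset H$, show that $G/H$ is compact so that $\Gamma$ is automatically a lattice in $H$, and then apply the Auslander-type reasoning from the proof of Lemma \ref{amenable_intersection} inside $H$ to descend to $\tilde{R}$.

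For the inclusion, set $K := G/R$; this is compact since $O$ is. The closure $\overline{\pi(\Gamma)}$ is then a compact subgroup of $K$ having $L$ as its identity component, so $L$ is normal in $\overline{\pi(\Gamma)}$. Thus $\pi(\Gamma) \subset N_K(L)$, and pulling back, $\Gamma \subset \pi^{-1}(N_K(L)) = N_G(\tilde R) = H$. For the compactness of $G/H$, observe that conjugation by $g \in G$ sends $\tilde R = \pi^{-1}(L)$ to $\pi^{-1}(\pi(g) L \pi(g)^{-1})$, so $G/H$ is in natural bijection with the $K$-orbit of $L$, which is the compact homogeneous space $K/N_K(L)$. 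Moreover, since $R \subset \tilde R \subset H$, the $G$-action on $G/H$ factors through $K$, a compact group; hence $G/H$ admits a finite positive $G$-invariant measure.

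Next, $G$ is unimodular (it carries a lattice), so Fubini applied to the fibration $G/\Gamma \to G/H$ (whose fibers are $G$-translates of $H/(H \cap \Gamma) = H/\Gamma$) gives $\operatorname{vol}(G/\Gamma) = \operatorname{vol}(G/H) \cdot \operatorname{vol}(H/\Gamma)$. Since the left side is finite and the first factor on the right lies in $(0,\infty)$, we conclude $\operatorname{vol}(H/\Gamma) < \infty$; combined with discreteness of $\Gamma$ in $H$, this shows $\Gamma$ is a lattice in $H$. Finally, $\tilde R$ is normal in $H$ by definition, and the image of $\Gamma$ in $H/\tilde R = N_K(L)/L$ lies in $\overline{\pi(\Gamma)}/L$, which is the finite component group of the compact Lie group $\overline{\pi(\Gamma)}$; in particular the image is discrete. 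Theorems 4.3 and 4.7 of \cite{onischik00}, applied exactly as in Lemma \ref{amenable_intersection}, then yield that $\Gamma \cap \tilde R$ is a lattice in $\tilde R$. The most delicate point is the Fubini step: it requires a genuine $G$-invariant measure on $G/H$, which is why it is essential that the $G$-action on $G/H$ factors through the compact group $K$.
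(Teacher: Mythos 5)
Your argument is correct, but it reaches both conclusions by a somewhat different route than the paper. The paper handles everything with two applications of Mostow's lemma on finite invariant measures \cite{mostow62}: first to the triple $\Gamma \subset H \subset G$, giving directly that $H/\Gamma$ has finite volume, and second to $\Gamma \subset \Gamma\tilde{R} \subset G$, after observing that $\Gamma\tilde{R} = \Gamma\overline{\Gamma R}^{\mathrm{o}} = \overline{\Gamma R}$ is closed, which yields that $\Gamma\tilde{R}/\Gamma \cong \tilde{R}/(\Gamma\cap\tilde{R})$ carries a finite invariant measure. You instead exploit the special structure of the situation: since $R \subset H$ and $G/R$ is compact, $G/H \cong (G/R)/N_{G/R}(L)$ is compact and carries a finite $G$-invariant measure, so a Weil-type disintegration substitutes for the first use of Mostow's lemma; and for the second assertion you use that $\tilde{R}$ is normal in $H$ with $\Gamma$ having finite image in $H/\tilde{R} \cong N_{G/R}(L)/L$, then invoke the standard criterion for intersections of lattices with closed normal subgroups (the same \cite{onischik00} results used in Lemma \ref{amenable_intersection}). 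Both routes are valid: the paper's is shorter and needs no compactness of $G/H$, while yours is more self-contained and along the way establishes the finiteness of the image of $\Gamma$ in $H/\tilde{R}$, a fact the paper in any case needs immediately after this claim. One point you should make explicit in the Fubini step: the existence of a $G$-invariant measure on $G/H$, together with unimodularity of $G$, forces $H$ to be unimodular, and this is what guarantees that the fibers $H/\Gamma$ carry $H$-invariant measures at all; with that noted, your product formula is exactly Weil's integration formula for $\Gamma \subset H \subset G$, i.e.\ you are in effect reproving the special case of Mostow's lemma that the paper cites.
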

\begin{proof}
First, since $\tilde{R}$ is closed, so is $H$. Clearly, $\Gamma$ normalizes $\tilde{R}$ so $\Gamma \subset H$. Using Mostow's Lemma, \cite{mostow62}, $H/\Gamma$ has finite volume. Turning to $\tilde{R}$, observe that $\Gamma \tilde{R} = \Gamma \overline{\Gamma R}^{\mathrm{o}} = \overline{\Gamma R}$ which is of course closed. Taking $\Gamma \subset \Gamma \tilde{R}$ and applying Mostow's Lemma again, it follows that $\Gamma \tilde{R} / \Gamma \cong \tilde{R} / (\Gamma \cap \tilde{R})$ has an invariant finite measure, which means that $\Gamma \cap \tilde{R}$ is a lattice in $\tilde{R}$.
\end{proof}

Next we wish to work with $\Gamma \cap H^{\mathrm{o}}$.
To do so, while keeping track on the rank of $\Gamma$, we prove that $\lvert H/H^{\mathrm{o}} \rvert$ is bounded uniformly for all lattices in $G$.
Recall that a compact Lie group admits a structure of a real algebraic group.

\begin{prop}
\label{bounding_normalizer_components}
Let $G$ be a reductive linear algebraic group, then there exists a constant $c = c(G)$ such that $\lvert N_G(F)/N_G(F)^{\mathrm{o}} \rvert \leq c$ for all algebraic subgroups $F \leq G$.
\end{prop}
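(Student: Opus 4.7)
The plan is to realize $N_G(F)$ as the stabilizer of a point under an algebraic action of $G$ on a variety of finite type, and then to invoke the general principle that for a connected algebraic group acting on such an $X$, the function $x \mapsto |G_x/G_x^{\circ}|$ is bounded by a constant $c(G,X)$. This principle is proved by Noetherian induction on $X$: the stabilizer scheme is constructible, so on a dense open subset of each irreducible component it has constant structure, and only finitely many component-group types appear overall.

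The case that $F$ is connected is the clean one. Since $F$ is generated by $\exp(\mathfrak{f})$ with $\mathfrak{f} = \mathrm{Lie}(F)$, one checks that $N_G(F) = N_G(\mathfrak{f})$, so $N_G(F)$ is exactly the stabilizer of $\mathfrak{f}$ for the adjoint action of $G$ on the Grassmannian $\mathrm{Gr}(\mathfrak{g}) = \bigsqcup_{d=0}^{\dim \mathfrak{g}} \mathrm{Gr}_d(\mathfrak{g})$, a finite disjoint union of projective varieties and hence of finite type. The general principle then gives a uniform bound on $|N_G(F)/N_G(F)^{\circ}|$ in this case.

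For general $F$ I would reduce to the connected case via the chain $N_G(F)^{\circ} \subseteq N_G(F) \subseteq N_G(F^{\circ})$: the image of $N_G(F)/N_G(F)^{\circ}$ in $N_G(F^{\circ})/N_G(F^{\circ})^{\circ}$ is bounded by the connected case, while the remaining kernel is controlled using that the connected group $N_G(F)^{\circ}$ must act trivially on the finite discrete group $F/F^{\circ}$ (continuous maps from a connected space into a finite set are constant). The main obstacle lies in this last step, since $|F/F^{\circ}|$ is not uniformly bounded as $F$ varies (e.g.\ finite cyclic subgroups of a maximal torus can be arbitrarily large): one must still show the normalizer's component count is uniformly controlled, by organizing the relevant finite extensions of $F^{\circ}$ inside $N_G(F^{\circ})$ on a parameter space of finite type so that the stabilizer principle can be applied a second time across the disconnectedness.
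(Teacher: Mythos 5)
Your treatment of the connected case is correct and rests on the same key identification as the paper: for connected $F$ in characteristic zero, $N_G(F)=N_G(\mathfrak{f})$ is the stabilizer of the point $\mathfrak{f}$ for the adjoint action of $G$ on the Grassmannian of subspaces of $\mathfrak{g}$. Where you genuinely diverge is in how the uniform bound on the component groups of these stabilizers is extracted. The paper notes that $N_G(\mathfrak{f})$ is cut out of $G$ by boundedly many linear conditions, hence has degree bounded independently of $\mathfrak{f}$, and then applies the generalized Bezout theorem to bound the number of irreducible (hence connected) components by the degree; this is elementary and in principle effective, at the cost of some bookkeeping (pure-dimensionality, and a separate bound for the components of the real points via Borel--Tits). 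You instead invoke constructibility of the number of connected components of the fibers of a finite-type morphism, applied to the stabilizer group scheme over the Grassmannian; this is softer and non-effective, but needs no degree estimates and applies verbatim to any algebraic action on a variety of finite type. If $G$ is a real group you would still need a word on passing from geometric components to components of the set of real points, which the paper addresses with the $2^{\mathrm{rank}(G)}$ bound.

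For disconnected $F$ you are right to flag a genuine difficulty, and your sketch does not close it: the kernel of $N_G(F)/N_G(F)^{\circ}\to N_G(F^{\circ})/N_G(F^{\circ})^{\circ}$ is not controlled by the connected case, the triviality of the $N_G(F)^{\circ}$-action on $F/F^{\circ}$ does not bound it, and since $\lvert F/F^{\circ}\rvert$ is unbounded one cannot naively parametrize the finite extensions of $F^{\circ}$ by a fixed finite-type space. You should be aware, however, that the paper's own proof has exactly the same restriction --- it only bounds the component group of $N_G(\mathfrak{f})$, which coincides with $N_G(F)$ only when $F$ is connected --- and that in the sole application (Corollary \ref{components_bound_cor}) the subgroup $L=\overline{\pi(\Gamma)}^{\mathrm{o}}$ is connected. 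So your argument covers the case actually needed; proving the proposition in its stated generality would require an additional idea in either approach.
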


For the proof of \ref{bounding_normalizer_components} we use the following result from Algebraic Geometry. 

\begin{thm}[Generalized Bezout's Theorem]
Let $X_1,\ldots,X_n$ be pure dimensional varieties over $\C$ (that is, every irreducible component has the same dimension), and let $Z_1,\ldots,Z_m$ be the irreducible components of $X_1 \cap \ldots \cap X_n$. Then, \[ \sum_{i=1}^{m} \text{deg} Z_i \leq \prod_{j=1}^{n} \text{deg} X_j \]

for reference see \cite[p. 519]{sch00}.
\end{thm}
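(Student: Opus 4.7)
The plan is to apply the Generalized Bezout Theorem to bound the degree of $N_G(F)$ in a fixed projective embedding of $G$, thereby bounding its number of irreducible components. Since $N_G(F)$ is an algebraic subgroup, its irreducible and connected components coincide, which yields the desired bound on $|N_G(F)/N_G(F)^{\mathrm{o}}|$.

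I would complexify and fix a closed embedding $G \hookrightarrow GL(V) \hookrightarrow \mathbb{P}(\mathrm{End}(V) \oplus \mathbb{C})$, realizing $G$ as a closed subvariety of some fixed degree $d_G$ depending only on $G$. The heart of the argument is the connected case: if $F$ is connected, then in characteristic zero $N_G(F) = N_G(\mathfrak{f})$ with $\mathfrak{f} = \mathrm{Lie}(F)$. Fixing a basis $v_1, \ldots, v_k$ of $\mathfrak{f}$, the condition $\mathrm{Ad}(g) v_i \in \mathfrak{f}$ becomes, after clearing $g^{-1}$ via the adjugate matrix $\mathrm{adj}(g)$, a system of at most $\dim \mathfrak{g}$ polynomial equations in the entries of $g$ of degree bounded in terms of $\dim V$ alone. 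Thus $N_G(F)$ is cut out in $G$ by at most $(\dim G)^2$ hypersurfaces of bounded degree. Generalized Bezout then gives $\sum_j \deg Z_j \leq C(G)$ over the irreducible components $Z_j$ of $N_G(F)$, bounding their number by a constant depending only on $G$.

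For general (possibly disconnected) $F$, I reduce to the connected case via $N_G(F) \leq N_G(F^{\mathrm{o}})$. A direct check shows $(N_G(F) \cap N_G(F^{\mathrm{o}})^{\mathrm{o}})^{\mathrm{o}} = N_G(F)^{\mathrm{o}}$, so
\[
|N_G(F)/N_G(F)^{\mathrm{o}}| \leq |N_G(F^{\mathrm{o}})/N_G(F^{\mathrm{o}})^{\mathrm{o}}| \cdot |(N_G(F) \cap N_G(F^{\mathrm{o}})^{\mathrm{o}})/N_G(F)^{\mathrm{o}}|.
\]
The first factor is bounded by the connected case. The second is the component count of a subgroup of the bounded-degree connected group $N_G(F^{\mathrm{o}})^{\mathrm{o}}$, cut out by the condition of stabilizing $F$, and is again amenable to a Bezout-type bound.

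The main obstacle will be making the polynomial degree estimates uniform: one must control how passing from the rational map $g \mapsto \mathrm{Ad}(g)$ to polynomial expressions via the adjugate inflates degrees, and verify that the \emph{number} of scalar conditions defining $N_G(\mathfrak{f})$ is controlled by $\dim G$ and not by any invariant of $F$. Once the connected case is set up this way, the passage to general $F$ is largely bookkeeping.
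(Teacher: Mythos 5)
Your proposal does not prove the statement in question. The statement to be proved is the Generalized Bezout inequality itself: for pure dimensional varieties $X_1,\ldots,X_n$ over $\mathbb{C}$, the sum of the degrees of the irreducible components of $X_1\cap\cdots\cap X_n$ is at most $\prod_j \deg X_j$. What you have written is instead an (essentially correct, and in fact somewhat more careful) outline of the \emph{application} of that inequality, namely the paper's Proposition on bounding $|N_G(F)/N_G(F)^{\mathrm{o}}|$. Your very first sentence --- ``The plan is to apply the Generalized Bezout Theorem...'' --- takes the theorem as given, so relative to the assigned statement the argument is circular: you assume exactly what was to be established. Nothing in your text engages with why the degree of an intersection is controlled by the product of the degrees.

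For the record, the paper does not prove this theorem either; it is quoted as a black box with a reference to Schinzel. If you did want to supply a proof, the standard routes are: (i) reduction to the diagonal, identifying $X_1\cap\cdots\cap X_n$ with $(X_1\times\cdots\times X_n)\cap\Delta$ inside the product and then cutting by the linear equations defining $\Delta$, combined with the fact that intersecting a pure dimensional variety with a hypersurface of degree $d$ multiplies the total degree of the components by at most $d$ (an induction using the definition of degree via generic linear sections and upper semicontinuity); or (ii) the refined Bezout theorem of Fulton--MacPherson, of which this inequality is a weak consequence. Either way the content is genuinely intersection-theoretic and is not recovered by the normalizer argument you sketched. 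Your material on adjugates, degree inflation of $\mathrm{Ad}$, and the reduction from disconnected $F$ to $F^{\mathrm{o}}$ would belong in a review of the \emph{next} result in the paper, where it would in fact be a reasonable (and in places more detailed) variant of the paper's argument.
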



\begin{proof}[Proof of Proposition \ref{bounding_normalizer_components}]


Let $\mathfrak{f}$ be the Lie algebra of $F$, and define 
$$
 H := \{ g \in G; Ad(g) \mathfrak{f} = \mathfrak{f} \}.
$$
This is exactly the normalizer of $\mathfrak{f}$. But now one can view $H$ as an algebraic variety embedded in a projective space. 
$H$ is of bounded degree, since $\mathfrak{f}$ is a vector space, thus the solutions to $Ad(g) \mathfrak{f} = \mathfrak{f}$ are defined by at most the number of polynomials defining $G$ and some bounded number of linear equations. 
As $H$ is an algebraic group, its irreducible components are exactly its connected components.
 Furthermore, each irreducible component has the same dimension, thus $H$ is pure dimensional, and viewing it as a variety over $\mathbb{C}$, the generalized Bezout's theorem can be invoked, taking $X_1 = H$. 
Since the degree of $H$ is bounded regardless of $F$, it follows that the number of irreducible components, hence the number of connected components, is uniformly bounded, as $\text{deg}(Z_i) \geq 1$. Going back to our original real algebraic group $G$, we look at the real points of $H$, and as the number of connected components of the real points of a connected reductive algebraic group is bounded by $2^{\text{Rank(G)}}$, see for example \cite[14.5]{bt65}, the result follows.
\end{proof}

\begin{rmk}
Note that in the case under consideration, where $G$ is compact, one can omit the term $2^{\text{Rank(G)}}$ in the argument above since $B$ is a compact group hence its connected components are algebraic.
\end{rmk}

 This leads to the following corollary.
\begin{cor}
\label{components_bound_cor}
Let $H = N_G(\tilde{R})$ be defined as before, then $\lvert H/H^{\mathrm{o}} \rvert \leq c(G)$.
\end{cor}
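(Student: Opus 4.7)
The plan is to deduce this corollary from Proposition \ref{bounding_normalizer_components} applied to the compact reductive group $G/R$. Recall the setting: $G = OR$ with $O$ compact semisimple and $R$ a connected solvable normal subgroup, so $G/R$ is compact semisimple and carries a natural real algebraic structure (as every compact Lie group does); in particular it is reductive.

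First I would identify $H = N_G(\tilde{R})$ with a preimage under $\pi := \pi_{G/R}$. Since $R$ is normal in $G$ and $R \subseteq \tilde{R}$, for any $g \in G$ conjugation preserves $\tilde{R}$ if and only if conjugation by $\pi(g)$ preserves $\pi(\tilde{R}) = L$. Hence
\[
H \;=\; \pi^{-1}\bigl(N_{G/R}(L)\bigr).
\]

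Next I would count components. Because $R$ is connected and $\pi$ is a quotient map, the preimage under $\pi$ of any connected set is connected (any locally constant function on the preimage is constant on fibers, hence descends). Applied to $N_{G/R}(L)^{\mathrm{o}}$ this gives $H^{\mathrm{o}} = \pi^{-1}\bigl(N_{G/R}(L)^{\mathrm{o}}\bigr)$, and therefore
\[
H/H^{\mathrm{o}} \;\cong\; N_{G/R}(L)/N_{G/R}(L)^{\mathrm{o}}.
\]

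Finally I would apply Proposition \ref{bounding_normalizer_components} to the compact real algebraic reductive group $G/R$ and the subgroup $F = L$. This requires that $L$ be algebraic in $G/R$; but $L$ was shown earlier to be a connected abelian closed subgroup of the compact group $G/R$, hence a torus, and compact tori in compact Lie groups are algebraic. The proposition then yields $|N_{G/R}(L)/N_{G/R}(L)^{\mathrm{o}}| \leq c(G/R)$, a bound depending only on $G$. Combined with the component count above, this gives $|H/H^{\mathrm{o}}| \leq c(G)$.

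The main thing to verify is the algebraicity of $L$, but this is immediate from the fact that $L$ is a torus; no serious obstacle is expected. Using the strengthening noted in the remark after Proposition \ref{bounding_normalizer_components}, one can even drop the $2^{\mathrm{Rank}(G)}$ correction since $G/R$ is compact, though this only affects the numerical value of the constant.
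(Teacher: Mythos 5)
Your proposal is correct and follows essentially the same route as the paper: identify $H$ as $\pi^{-1}(N_{G/R}(L))$, use connectedness of $R$ to equate the component counts, and apply Proposition \ref{bounding_normalizer_components} to $G/R$ viewed as a (compact) real algebraic group. Your extra verification that $L$ is algebraic (being a torus) is a detail the paper leaves implicit but is a welcome addition, not a different argument.
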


\begin{proof}
Let $\pi : G \rightarrow G/R $ be the projection, and define $N = N_{G/R}(L)$. Then $H = \pi^{-1} (N)$ and consequentially, as $R$ is connected,  $\lvert N/N^{\mathrm{o}} \rvert = \lvert H/H^{\mathrm{o}} \rvert $. Now, $N \leq G/R$ being a compact Lie group can be viewed, due to classical results of Peter--Weyl and Chevalley \cite{chev46, hall_15}, as a linear algebraic group, and so the corollary follows.
\end{proof}

Now, look at $\Gamma_{H^{\mathrm{o}}} := \Gamma \cap H^{\mathrm{o}}$. Let $\pi_{H^\circ/\tilde{R}} : H^{\mathrm{o}} \rightarrow H^{\mathrm{o}}/ \tilde{R}$ be the natural projection, then $\pi_{H^\circ/\tilde{R}}(\Gamma_{H^{\mathrm{o}}})$ is finite, since by Claim \ref{intersection_is_lattice} the intersection is a lattice, and thus also the projection, and we have that $R \leq \tilde{R}\le H^{\mathrm{o}} \leq G$, while $G/R$ is compact, hence this projection is discrete in compact, hence finite. We are left to show that the image has bounded rank which is independent of $\Gamma$. To this end we prove the following lemma.

\begin{prop}
\label{rank_of_finite_subgroups}
For $n \in \mathbb{N}$ there exists a constant $c = c(n)$ such that for every finite subgroup $H \leq GL_n(\mathbb{C})$ it holds that $d(H) \leq c$.
\end{prop}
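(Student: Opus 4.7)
The plan is to reduce the problem, via Jordan's theorem on finite subgroups of $GL_n(\mathbb{C})$, to bounding separately the rank of an abelian subgroup of $GL_n(\mathbb{C})$ and the rank of a finite group of bounded order.

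Recall Jordan's theorem: there exists a constant $J(n)$ such that every finite subgroup $H \leq GL_n(\mathbb{C})$ contains an abelian normal subgroup $A \trianglelefteq H$ of index $[H:A] \leq J(n)$. I would apply this to produce such an $A$, and then use the elementary fact that for any normal subgroup $N \trianglelefteq H$ one has $d(H) \leq d(N) + d(H/N)$ (lifting generators of the quotient and concatenating with generators of $N$). Thus it suffices to bound $d(A)$ and $d(H/A)$ uniformly in terms of $n$.

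For the abelian factor, I would note that $A$ is a finite subgroup of $GL_n(\mathbb{C})$ whose elements are of finite order, hence diagonalizable, and since they mutually commute they are simultaneously diagonalizable. Therefore $A$ embeds into the diagonal torus $(\mathbb{C}^\ast)^n$, and consequently $A$ is isomorphic to a subgroup of $(\mathbb{C}^\ast)^n$. Any finite subgroup of $(\mathbb{C}^\ast)^n$ is a product of at most $n$ cyclic groups, so $d(A) \leq n$. For the quotient, $|H/A| \leq J(n)$, and since each additional generator of a finite group at least doubles the size of the subgroup generated, one gets $d(H/A) \leq \log_2 J(n)$.

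Combining the two bounds yields
\[
d(H) \leq d(A) + d(H/A) \leq n + \log_2 J(n) =: c(n),
\]
which is the desired uniform bound. The only nontrivial ingredient is Jordan's theorem itself; everything else is elementary, so I do not anticipate a real obstacle once Jordan's theorem is invoked.
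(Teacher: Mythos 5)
Your proposal is correct and follows essentially the same route as the paper: Jordan's theorem produces a normal abelian subgroup $A$ of bounded index, simultaneous diagonalization gives $d(A)\le n$, and the extension inequality $d(H)\le d(A)+d(H/A)$ finishes the argument. The only difference is cosmetic --- you bound $d(H/A)$ by $\log_2 J(n)$ rather than by the index itself, which slightly sharpens the constant but changes nothing structurally.
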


\begin{proof}
By the Jordan--Schur theorem, there is a constant $k = k(n)$ such that given a finite subgroup $H$ of $GL_n(\mathbb{C})$ there exists an abelian subgroup, $A \trianglelefteq H$ such that $\lvert H:A \rvert \leq k$. Now,  every $T \in A$ satisfies $T^{\lvert A \rvert} = \text{Id}$ and so its minimal polynomial divides $x^{\lvert A \rvert}-1$ which has distinct roots, hence $T$ is diagonalizable. By an elementary result in linear algebra, a finite set of commuting diagonalizable matrices is simultaneously diagonalizable, and so $A$ is isomorphic to a finite diagonal subgroup. Such a group is a subgroup of the direct sum of $n$ cyclic groups, hence generated by at most $n$ elements. Consequently, we have that $d(H) = n + \lvert H:A \rvert \leq n + k$. In \cite{collins07} a concrete bound on $k$ is given by $k \leq (n+1)!$.
\end{proof}

\begin{cor}
\label{finite_subgroups_of_compact}
Let $k \in \mathbb{N}$, then there exists a constant $m = m(k)$ such that for every connected $k$-dimensional compact Lie group $G$ and every finite subgroup $H \leq G$ it holds that $d(H)\leq m$.
\end{cor}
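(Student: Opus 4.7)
The plan is to reduce Corollary \ref{finite_subgroups_of_compact} to Proposition \ref{rank_of_finite_subgroups} by constructing a linear representation of $G$ of dimension bounded in terms of $k$. The natural candidate is the adjoint representation $\mathrm{Ad} \colon G \to GL(\mathfrak{g}) \cong GL_k(\R) \subset GL_k(\C)$, whose kernel is the center $Z(G)$.

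Applying $\mathrm{Ad}$ to the finite subgroup $H \leq G$ yields the short exact sequence $1 \to H \cap Z(G) \to H \to \mathrm{Ad}(H) \to 1$, and hence the standard estimate $d(H) \leq d(H \cap Z(G)) + d(\mathrm{Ad}(H))$. Since $\mathrm{Ad}(H)$ is a finite subgroup of $GL_k(\C)$, Proposition \ref{rank_of_finite_subgroups} bounds its rank by some constant $c(k)$. It therefore remains to estimate $d(H \cap Z(G))$ by a function of $k$, uniformly over all compact connected $G$ of dimension $k$.

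For this I would use that $Z(G)$, being a compact abelian Lie group, splits as $Z(G) \cong T^a \times F_G$, where $a = \dim Z(G) \leq k$ and $F_G \cong \pi_0(Z(G))$ is a finite abelian group. A finite subgroup of $T^a \times F_G$ is abelian of rank at most $a + d(F_G) \leq k + \log_2 |F_G|$, so it suffices to bound $|\pi_0(Z(G))|$ by a function of $k$.

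The main (and only) delicate step is this last bound. Here I would observe that the quotient $G/Z(G)^0$ is a connected compact Lie group whose Lie algebra $\mathfrak{g}/\mathfrak{z}(\mathfrak{g})$ is semisimple (since $\mathfrak{g} = \mathfrak{z}(\mathfrak{g}) \oplus [\mathfrak{g},\mathfrak{g}]$ in the compact case), of dimension at most $k$; by the classification of compact semisimple Lie algebras there are only finitely many such, and for each the simply connected compact form has finite center, so the order of the center of $G/Z(G)^0$ is bounded by some $f(k)$. The quotient map induces an injection $Z(G)/Z(G)^0 \hookrightarrow Z(G/Z(G)^0)$, giving $|F_G| \leq f(k)$. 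Combining the estimates yields $d(H) \leq c(k) + k + \log_2 f(k)$, which furnishes the desired $m = m(k)$.
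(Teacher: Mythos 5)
Your proof is correct, but it follows a genuinely different route from the paper's. The paper first shows that there are only finitely many isomorphism classes of connected compact Lie groups of a fixed dimension $k$ (via the presentation $G=(TS)/D$ with $T$ a torus, $S$ compact semisimple and $D$ finite central), deduces a uniform faithful embedding $G\hookrightarrow GL_{n(k)}(\C)$, and applies Proposition \ref{rank_of_finite_subgroups} to $H$ itself. You instead use the adjoint representation, which has dimension exactly $k$ but is not faithful: you apply Proposition \ref{rank_of_finite_subgroups} to $\mathrm{Ad}(H)\leq GL_k(\C)$ and must then separately control $d(H\cap Z(G))$. Your handling of that kernel is sound: the splitting $Z(G)\cong T^a\times F_G$ of a compact abelian Lie group, the rank bound $a+d(F_G)$ for its finite subgroups, and the injection $Z(G)/Z(G)^{\mathrm{o}}\hookrightarrow Z(G/Z(G)^{\mathrm{o}})$ into the finite center of a compact semisimple group all check out. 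Note that both arguments ultimately invoke the classification of compact semisimple Lie algebras in dimension at most $k$ and the finiteness of their centers, so yours is not classification-free; what it buys is explicitness, since the ambient linear group has dimension exactly $k$ and Collins' bound $(k+1)!$ then yields a concrete value of $m(k)$, whereas the paper's $n(k)$ is left unspecified. The paper's version is shorter once the finiteness of isomorphism classes is granted, since it needs no further bookkeeping for the central part.
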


\begin{proof} We show that there are only finitely many isomorphism classes of connected compact Lie groups of a given dimension. Thus, there exists $n = n(k)$ such that every $k$ dimensional connected compact Lie group embeds in $GL_n(\mathbb{C})$. The result then follows from the previous proposition. 

By \cite[13, Thm 1.3]{hochschild65}, every compact connected Lie group of dimension $k$ is  of the form $G = (TS)/D$ where $T$ is a torus, $S$ is compact semi-simple and $D$ is a finite central subgroup, such that $D \cap T, D \cap S$ are trivial. This implies that the projection of $D$ to $S$ is injective, and the image is of course central. The center of compact semi-simple groups is finite, thus for each $T,S$ there are only finitely many such quotients. Due to the classification of semi-simple groups there is also only a finite number of such $S$ of a given dimension. 
\end{proof}

Specializing to our case, we get the following corollary.

\begin{cor}
\label{finite_projection_bound}
$d(\pi_{\tilde{R}}(\Gamma_{H^{\mathrm{o}}}))$ is uniformly bounded regardless of the choice of $\Gamma$.
\end{cor}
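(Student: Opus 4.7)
The plan is to observe that $\pi_{H^\circ/\tilde R}(\Gamma_{H^{\mathrm{o}}})$ is a finite subgroup of the compact connected Lie group $H^\circ/\tilde R$, and then apply Corollary \ref{finite_subgroups_of_compact} after noting that $\dim(H^\circ/\tilde R)$ is bounded by a constant depending only on $G$.

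In more detail, the first step is to identify the ambient group. Since $R\subseteq\tilde R\subseteq H^\circ$ and $H^\circ$ is connected, the quotient $H^\circ/\tilde R$ is a connected Lie group which embeds as a closed subgroup of $G/\tilde R$, a quotient of the compact group $G/R$. Hence $H^\circ/\tilde R$ is a connected \emph{compact} Lie group whose dimension is at most $\dim(G/R)\leq\dim G$.

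Next, by the discussion immediately preceding the corollary, $\pi_{H^\circ/\tilde R}(\Gamma_{H^{\mathrm{o}}})$ is a discrete subgroup of the compact group $H^\circ/\tilde R$, hence finite. Applying Corollary \ref{finite_subgroups_of_compact} with $k=\dim G$ yields a constant $m=m(\dim G)$ such that every finite subgroup of any connected compact Lie group of dimension at most $\dim G$ is generated by at most $m$ elements. Therefore $d\bigl(\pi_{H^\circ/\tilde R}(\Gamma_{H^{\mathrm{o}}})\bigr)\leq m$, and this bound is independent of $\Gamma$.

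There is no real obstacle here; the only point requiring a moment's care is to verify that the dimension of $H^\circ/\tilde R$ is controlled purely in terms of $G$ (and not in terms of $\Gamma$, which enters through the choice of $L$ and hence of $\tilde R$). This is immediate from $\tilde R\subseteq G$ and $\dim(H^\circ/\tilde R)\leq\dim(G/\tilde R)\leq\dim(G/R)$, since enlarging $\tilde R$ can only \emph{decrease} the dimension of the quotient.
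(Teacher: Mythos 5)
Your proof is correct and follows essentially the same route as the paper: identify $H^\circ/\tilde R$ as a connected compact Lie group of dimension bounded in terms of $G$ alone, note that the image of $\Gamma_{H^{\mathrm{o}}}$ there is finite, and invoke Corollary \ref{finite_subgroups_of_compact}. Your explicit verification that $\dim(H^\circ/\tilde R)\leq\dim(G/R)$ independently of the choice of $\tilde R$ is a useful extra detail that the paper leaves implicit.
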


\begin{proof}
$H^{\mathrm{o}} / \tilde{R}$ is a compact Lie group of bounded dimension, hence we can apply Corollary \ref{finite_subgroups_of_compact}. 
\end{proof}

We conclude this part with the following theorem.

\begin{thm}
\label{rank_in_amenable_radical_is_bounded}
Let $A$ be a connected amenable Lie group.
Then there exists a constant $b =b(A)$ such that $d(\Gamma) \leq b$ for all lattices $\Gamma \leq A$.
\end{thm}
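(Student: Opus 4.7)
The plan is to assemble the lemmas just established into one estimate. A connected amenable Lie group $A$ has Levi decomposition $A = OR$ with $O$ compact semisimple and $R$ the solvable radical, since amenability forces the semi-simple part to be compact, so we are precisely in the setting of this subsection. For a fixed lattice $\Gamma \leq A$, I retain $L$, $\tilde R$, $H = N_A(\tilde R)$ and $\Gamma_{H^{\mathrm{o}}} := \Gamma \cap H^{\mathrm{o}}$ as constructed above.

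The first step is to pass to the finite-index subgroup $\Gamma_{H^{\mathrm{o}}}$. By Corollary~\ref{components_bound_cor}, $[\Gamma : \Gamma_{H^{\mathrm{o}}}] \leq [H : H^{\mathrm{o}}] \leq c(A)$. Adjoining coset representatives to any generating set of $\Gamma_{H^{\mathrm{o}}}$ yields the standard Schreier-type bound
\[
d(\Gamma) \leq d(\Gamma_{H^{\mathrm{o}}}) + c(A) - 1.
\]

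The second step is to split $\Gamma_{H^{\mathrm{o}}}$ along $\tilde R$. Since $\tilde R$ is connected and contained in $H$, it lies in $H^{\mathrm{o}}$, so $\Gamma_{H^{\mathrm{o}}} \cap \tilde R = \Gamma \cap \tilde R$, and the short exact sequence
\[
1 \to \Gamma \cap \tilde R \to \Gamma_{H^{\mathrm{o}}} \to \pi_{H^{\mathrm{o}}/\tilde R}(\Gamma_{H^{\mathrm{o}}}) \to 1
\]
gives $d(\Gamma_{H^{\mathrm{o}}}) \leq d(\Gamma \cap \tilde R) + d\bigl(\pi_{H^{\mathrm{o}}/\tilde R}(\Gamma_{H^{\mathrm{o}}})\bigr)$. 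By Claim~\ref{intersection_is_lattice}, $\Gamma \cap \tilde R$ is a lattice in the connected solvable group $\tilde R$, so by Mostow's theorem $d(\Gamma \cap \tilde R) \leq \dim \tilde R \leq \dim A$. The remaining term is uniformly bounded by Corollary~\ref{finite_projection_bound}. Combining everything yields a bound of the form $b = b(A)$ as required.

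Most of the real content has already been absorbed into the preceding claims, so what remains is bookkeeping: the Schreier-type bound in the first step, the containment $\tilde R \subseteq H^{\mathrm{o}}$, and the identification $\Gamma_{H^{\mathrm{o}}} \cap \tilde R = \Gamma \cap \tilde R$. The conceptual decision that makes this argument go through is the choice to descend through $H^{\mathrm{o}}$ rather than attempt to quotient $A$ by $\tilde R$ directly, which sidesteps the failure of $\tilde R$ to be normal in $A$.
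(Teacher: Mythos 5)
Your proposal is correct and follows the paper's proof essentially verbatim: Levi decomposition $A=OR$, pass to $H=N_A(\tilde R)$ with boundedly many components, split $\Gamma\cap H^{\mathrm o}$ along the solvable lattice $\Gamma\cap\tilde R$ via the short exact sequence, and bound the finite quotient by Corollary~\ref{finite_projection_bound}. The only difference is that you make explicit (via the coset-representative bound and the observations $\tilde R\subseteq H^{\mathrm o}$, $\Gamma_{H^{\mathrm o}}\cap\tilde R=\Gamma\cap\tilde R$) the step the paper compresses into ``we can assume $H$ is connected,'' which is a welcome clarification rather than a deviation.
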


\begin{proof}
Let $A = O \ltimes R$ be the Levi decomposition of $G$, where $R$ is connected, normal and solvable, and $O$ is compact, connected and semi-simple.
Take $H = N_{A}(\tilde{R})$ as before. In \ref{components_bound_cor} we showed that the number of connected components is uniformly bounded, so we can assume $H$ is connected. 
The rank of $\Gamma_{H^{\mathrm{o}}} = \Gamma$ is then bounded uniformly due to the following exact sequence 

\[
1 \rightarrow \Gamma \cap \tilde{R} \rightarrow \Gamma \rightarrow \bigslant{\Gamma}{\Gamma \cap \tilde{R}} \rightarrow 1,
\]
which implies that
$$
 d(\Gamma) \leq d(\Gamma \cap \tilde{R}) + d(\bigslant{\Gamma}{\Gamma \cap \tilde{R}}).
$$ 
Now the first summand is uniformly bounded since $\Gamma \cap \tilde{R}$ is a lattice in a solvable Lie group, hence generated by at most dim$(\tilde{R})$ elements \cite[3.7]{raghunathan72}. The second summand is uniformly bounded by Corollary \ref{finite_projection_bound}.
\end{proof}


\subsection{Lattices in semi-simple Lie groups}

Since Theorem \ref{gelander2011} was stated and proved in \cite{gelander11} for irreducible lattices, we restate and deduce it here without this assumption. 

\begin{thm}
\label{reducible_generalization}
Let $G$ be a connected semi-simple Lie group with no compact factors, then there exists a constant $C= C(G)$ such that \[ d(\Gamma) \leq C \operatorname{vol}(G/\Gamma) \]
for every lattice $\Gamma \leq G$.
\end{thm}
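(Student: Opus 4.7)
The plan is to reduce to the irreducible setting already handled by Theorem \ref{gelander2011}. Write $G = G_1 \cdots G_r$ as an almost direct product of its simple factors. A classical structure result for lattices in semi-simple Lie groups (see, e.g., \cite[Ch.~5]{raghunathan72}) asserts that $\Gamma$ admits a finite-index subgroup $\Gamma' = \Gamma_1' \times \cdots \times \Gamma_s'$, where each $\Gamma_i'$ is an irreducible lattice in a subproduct $H_i$ of simple factors and the $H_i$ partition the simple factors of $G$. The index $N := [\Gamma:\Gamma']$ can be bounded by a constant depending only on $G$, essentially via the permutation action of $\Gamma$ on the isomorphism classes of simple factors, together with the fact that once that permutation is trivialized the decomposition becomes canonical.

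Granting this, apply Theorem \ref{gelander2011} to each irreducible factor to obtain $d(\Gamma_i') \leq C_i \operatorname{vol}(H_i/\Gamma_i')$. Setting $v_i := \operatorname{vol}(H_i/\Gamma_i')$ and $C_0 := \max_i C_i$, one has
$$
d(\Gamma') \leq \sum_{i=1}^s d(\Gamma_i') \leq C_0 \sum_{i=1}^s v_i.
$$
By the Kazhdan--Margulis theorem, there exists $\mu = \mu(G) > 0$ such that $v_i \geq \mu$ for every $i$; this gives for each $i$
$$
v_i \leq \mu^{-(s-1)} \prod_{j=1}^s v_j = \mu^{-(s-1)} \operatorname{vol}(G/\Gamma') = \mu^{-(s-1)} N \operatorname{vol}(G/\Gamma).
$$
Summing and using $s \leq r$, one obtains $d(\Gamma') \leq C_1(G) \operatorname{vol}(G/\Gamma)$.

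To lift the bound back to $\Gamma$, adjoin a set of $N$ coset representatives for $\Gamma/\Gamma'$ to any generating set of $\Gamma'$; this gives $d(\Gamma) \leq d(\Gamma') + N$. A second application of Kazhdan--Margulis, this time directly to $\Gamma$, supplies $\operatorname{vol}(G/\Gamma) \geq \mu$, which allows the additive constant $N$ to be absorbed into a multiple of $\operatorname{vol}(G/\Gamma)$. The result is a constant $C = C(G)$ with $d(\Gamma) \leq C \operatorname{vol}(G/\Gamma)$.

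The main obstacle I anticipate is justifying the uniform bound on the index $N$ in the product decomposition of $\Gamma$; once this is established, the rest of the argument is a routine combination of Theorem \ref{gelander2011}, the Kazhdan--Margulis lower volume bound, and the elementary inequality above converting a sum of co-volumes into a product.
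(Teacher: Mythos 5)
Your reduction hinges on the claim that the index $N=[\Gamma:\Gamma']$ of the product subgroup $\Gamma'=\prod_i(\Gamma\cap H_i)$ is bounded by a constant depending only on $G$, and this claim is false. Take $G=\mathrm{SL}_2(\mathbb{R})\times \mathrm{SL}_2(\mathbb{R})$, let $\Lambda\leq \mathrm{SL}_2(\mathbb{R})$ be a cocompact lattice with a surjection $\phi:\Lambda\to\mathbb{Z}/n\mathbb{Z}$, and set $\Gamma=\{(a,b)\in\Lambda\times\Lambda:\ \phi(a)=\phi(b)\}$. This is a lattice in $G$, each factor is $\Gamma$-invariant (so the permutation action you invoke is trivial), and yet $\Gamma\cap G_1=\ker\phi\times 1$, $\Gamma\cap G_2=1\times\ker\phi$, so $[\Gamma:\Gamma']=n$ is arbitrarily large. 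Consequently both places where you use the boundedness of $N$ fail: the passage from $\prod_j v_j=N\operatorname{vol}(G/\Gamma)$ to $d(\Gamma')\leq C_1(G)\operatorname{vol}(G/\Gamma)$, and the final step $d(\Gamma)\leq d(\Gamma')+N$. (In this example the covolume of $\Gamma$ also grows linearly in $n$, so the theorem itself is not threatened, but your chain of inequalities produces a bound of the shape $C\cdot N\cdot\operatorname{vol}(G/\Gamma)+N$ with $N$ uncontrolled.)

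The paper sidesteps this entirely by never passing to the product subgroup: it writes $G=G_1\times G_2$, takes $\Gamma_1=\Gamma\cap G_1$ but $\Gamma_2=\pi_{G_2}(\Gamma)$ (the \emph{projection}, not the intersection), so that $1\to\Gamma_1\to\Gamma\to\Gamma_2\to 1$ is exact and $d(\Gamma)\leq d(\Gamma_1)+d(\Gamma_2)$ with no index term, and proves the exact multiplicativity $\operatorname{vol}(G/\Gamma)=\operatorname{vol}(G_1/\Gamma_1)\operatorname{vol}(G_2/\Gamma_2)$ for this mixed intersection/projection pair. It then inducts on dimension and converts the sum of the two volume bounds into their product using $d(\Gamma_i)\geq 2$ (rather than Kazhdan--Margulis, though that would also work). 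Your argument can be repaired along the same lines: peel off one factor at a time, always projecting to the complementary factor instead of intersecting with it, and the index $N$ never appears.
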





\begin{lem}
Let $\Gamma \leq G_1 \times G_2$ be a lattice and assume $\Gamma_1 := \Gamma \cap G_1$ is a lattice in $G_1$. Now let $\pi : G \rightarrow G/G_1$ be the projection and denote $\Gamma_2 := \pi(\Gamma)$. Then $\operatorname{covol}(\Gamma) = \operatorname{vol}(G_1 / \Gamma_1) \operatorname{vol}(G_2/ \Gamma_2)$
\end{lem}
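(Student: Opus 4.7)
The plan is to build an explicit product fundamental domain. Since $G_1$ is normal in $G_1 \times G_2$, the subgroup $\Gamma_1 = \Gamma \cap G_1$ is normal in $\Gamma$, giving a short exact sequence
\[
1 \rightarrow \Gamma_1 \rightarrow \Gamma \rightarrow \Gamma_2 \rightarrow 1,
\]
where $\Gamma_2 = \pi(\Gamma)$ embeds into $G_2 \cong G/G_1$. First I would observe that $\Gamma_2$ is in fact a lattice in $G_2$: this is a standard consequence of $\Gamma$ being a lattice in $G_1 \times G_2$ and $\Gamma_1$ being a lattice in the closed normal subgroup $G_1$, in the same spirit as Lemma \ref{amenable_intersection} (and obtainable from \cite[Theorems 4.3, 4.7]{onischik00}). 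This ensures the right-hand side of the identity is a well-defined finite quantity.

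Next I would pick strict fundamental domains $F_1 \subset G_1$ for $\Gamma_1$ and $F_2 \subset G_2$ for $\Gamma_2$, and claim that $F_1 \times F_2 \subset G_1 \times G_2$ is a strict fundamental domain for $\Gamma$. For coverage: given $(g_1, g_2) \in G_1 \times G_2$, choose $\gamma_2 \in \Gamma_2$ so that $g_2 \gamma_2 \in F_2$; lift $\gamma_2$ via the surjection $\Gamma \to \Gamma_2$ to some $(\gamma_1, \gamma_2) \in \Gamma$; then right-multiply by an appropriate element $(\delta, e) \in \Gamma_1 \subset \Gamma$, which adjusts only the first coordinate, to move it into $F_1$. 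For uniqueness: if $(f_1, f_2)\cdot (\gamma_1, \gamma_2) = (f_1', f_2')$ with both endpoints in $F_1 \times F_2$ and $(\gamma_1, \gamma_2) \in \Gamma$, projecting to $G_2$ forces $\gamma_2 = e$ (since $F_2$ is strict for $\Gamma_2$), hence $(\gamma_1, e) \in \Gamma \cap G_1 = \Gamma_1$, and then $F_1$ being strict for $\Gamma_1$ forces $\gamma_1 = e$.

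Since the Haar measure on a direct product of locally compact groups is the product of the Haar measures on the factors, Fubini then yields
\[
\operatorname{covol}(\Gamma) = \operatorname{vol}(F_1 \times F_2) = \operatorname{vol}(F_1)\operatorname{vol}(F_2) = \operatorname{vol}(G_1/\Gamma_1)\operatorname{vol}(G_2/\Gamma_2),
\]
as desired. The only step requiring any thought is the discreteness/lattice property of $\Gamma_2$ invoked at the start; after that, everything reduces to routine bookkeeping with the short exact sequence and elementary fundamental-domain manipulations, so I do not anticipate a real obstacle.
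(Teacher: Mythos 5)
Your proposal is correct and follows essentially the same route as the paper: both construct a product fundamental domain $D_1\times D_2$ by first adjusting the $G_2$-coordinate via (a lift of) an element of $\Gamma_2$ and then the $G_1$-coordinate via an element of $\Gamma_1$, and then use that the Haar measure on $G_1\times G_2$ is the product measure. Your write-up merely spells out the coverage and uniqueness checks (and the preliminary fact that $\Gamma_2$ is a lattice) in more detail than the paper does.
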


\begin{proof}
We claim that some fundamental domain $D$ for $G/\Gamma$ can be viewed as $D = D_1 \times D_2$ where $D_i$ is a fundamental domain of $\Gamma_i \leq G_i$. This is evident as we can shift every element $(g_1,g_2)$ to $D$ uniquely by first multiplying by an element of $G_2$ and then by an element of $G_1$ which would not change the second coordinate. Since $m_G = m_{G_1} \times m_{G_2}$ is a Haar measure on $G$ the result follows.
\end{proof}

\begin{proof}[Proof of Theorem \ref{reducible_generalization}]
First note that by replacing $G$ with its simply connected cover we may suppose that $G$ is simply connected.
Suppose that $\Gamma\le G$ is not irreducible. Then there is a non-trivial decomposition $G=G_1\times G_2$ such that $\Gamma\cap G_i$ is a lattice in $G_i$ for $i=1,2$.

Denote $\Gamma_1 := \Gamma \cap G_1$ and $\Gamma_2 := \pi_{G_2}(\Gamma)$ where $\pi_{G_2}: G \rightarrow G_2$ is the projection. 
Since $G_1$ and $G_2$ have dimension smaller than G, we can proceed by induction
\begin{align*}
d(\Gamma) & \leq d(\Gamma_1) + d(\Gamma_2) \leq C_1 \text{vol}(G_1 / \Gamma_1) + C_2 \text{vol}(G_2 / \Gamma_2)  \\ &  \leq C_3 \text{vol}(G_1/\Gamma_1) \text{vol}(G_2/\Gamma_2)  \le C \text{vol}(G/\Gamma)
\end{align*}
Here the third inequality is due to the fact that these quantities are bounded from below by $2$ as this is the minimal number of generators of a lattice in a non-compact semi-simple Lie group. The fourth inequality is due to the fact that $G$ admits finitely many product decompositions. 
\end{proof}

\subsection{Combining the cases}

The proof of Theorem \ref{main_theorem} is now straightforward by combining the ingredients we have proved so far, that is, Theorem \ref{rank_in_amenable_radical_is_bounded}  and Theorem \ref{reducible_generalization}.

\begin{proof}[Proof of Theorem \ref{main_theorem}]
Let $\Gamma \leq G$ be a lattice, then one obtains the exact sequence 
\[ 
1 \rightarrow \Gamma \cap A \rightarrow \Gamma \rightarrow \bigslant{\Gamma}{\Gamma \cap A} \rightarrow 1
\]
so 
$$
 d(\Gamma) \leq  d(\bigslant{\Gamma}{\Gamma \cap A}) + d(\Gamma \cap A)  \leq C \operatorname{covol}_s(\Gamma) + b.
$$

\end{proof}

\begin{rmk}
Recall that by the Kazhdan--Margulis theorem $\operatorname{covol}_s(\Gamma)$ is bounded from below. Thus one can omit the additive constant $b$ from the statement of Theorem \ref{main_theorem} by enlarging the multiplicative one.
\end{rmk}


\begin{thebibliography}{9}

\bibliographystyle{alpha}

\bibitem[BCGM16]{bcmg16}
U. Bader, P. E. Caprace, T. Gelander, and S. Mozes, \textit{Lattices in amenable groups}, arXiv preprint arXiv:1612.06220,  2016.


 
 \bibitem[BT65]{bt65}
 A. Borel, J. Tits, \textit{Groupes réductifs}, Publications Mathématiques de l'Institut des Hautes Études Scientifiques, 27(1), pp.55-150, 1965.

\bibitem[BG07]{toti}
E. Breuillard, T. Gelander, A topological Tits alternative. Ann. of Math. (2) 166 (2007), no. 2, 427--474.
 
 
\bibitem[Ch46]{chev46}
Claude Chevalley, \textit{Theory of Lie Groups} , Princeton University Press, 1946.
 
\bibitem[Co07]{collins07}
 M. J. Collins, 
 \textit{On Jordan's theorem for complex linear groups} ,
 Journal of Group Theory. 10 (4): 411–423,
  2007.
 

\bibitem[Ge11]{gelander11}
T. Gelander, \textit{Volume versus rank of lattices}, Journal f\"{u}r die reine und angewandte Mathematik (Crelles Journal), 2011 (661), 237-248.

\bibitem[Ha15]{hall_15}
Hall, Brian C. (2015) \textit{Lie Groups, Lie Algebras, and Representations: An Elementary Introduction}, Graduate Texts in Mathematics, 222 (2nd ed.), Springer.

\bibitem[Hoch65]{hochschild65} 
G. P. Hochschild, \textit{The structure of Lie groups}, Holden-Day series in mathematics, Holden-Day, 1965.

\bibitem[Mo54]{mostow54}
G.D. Mostow,  \textit{Factor spaces of solvable groups} Annals of Mathematics, (1954) , 1-27.

   \bibitem[Mo62]{mostow62}
  G. D. Mostow, \textit{Homogeneous spaces with finite invariant measure}, Annals of
Mathematics 75 (1962), no. 1, 17–37.
   
\bibitem[Mo71]{mostow71}
G. D. Mostow, \textit{Arithmetic subgroups of groups with radical}, Annals of Mathematics
93 (1971), no. 3, 409–438.   
   

   
   \bibitem[OV00]{onischik00}
 A. L. Onischik, E. B. Vinberg ,
 \textit{Lie Groups and Lie Algebras II: Discrete Subgroups of Lie Groups and Cohomologies of Lie Groups and Lie Algebras},
 Springer, Heidelberg, 2000.
   
    \bibitem[Ra72]{raghunathan72}
   M. S. Raghunathan, 
   \textit{Discrete Subgroups of Discrete Groups}, Springer Verlag, 1972.

\bibitem[Sch00]{sch00}
A. Schinzel, \textit{Polynomials with special regard to reducibility}, with an appendix
by Umberto Zannier. Cambridge University Press, 2000.

\end{thebibliography}
\end{document}